\newcommand{\Z}{{\mathbb Z}}
\newcommand{\N}{{\mathbb N}}
 \newcommand{\G}{\mathcal{G}}
  \newcommand{\D}{\mathcal{D}}
\newcommand{\R}{{\mathbb R}}
\renewcommand{\phi}{{\varphi}}
\newcommand{\Per}{\mathrm{Per}}
\newtheorem{theorem}{Theorem}[section]
\newtheorem{corollary}[theorem]{Corollary}
\newtheorem{lemma}[theorem]{Lemma}
\newtheorem{proposition}[theorem]{Proposition}
\theoremstyle{definition}
\newtheorem{definition}[theorem]{Definition}
\newtheorem{remark}[theorem]{Remark}
\begin{document}
 
\title{Minimal periodic foams with equal cells}

% Place all authors' names in [ ] shown as running head, Leave { } empty
% Please use `and' to connect the last two names if applicable
% Use FirstNameInitial.  MiddleNameInitial. LastName, or only last names of authors if there are too many authors
\author{Annalisa Cesaroni}
\address{Dipartimento di Matematica "Tullio Levi-Civita", Universit\`{a} di Padova, Via Trieste 63, 35131 Padova, Italy}
\email{annalisa.cesaroni@unipd.it}
\author{Matteo Novaga}
\address{Dipartimento di Matematica, Universit\`{a} di Pisa, Largo Bruno Pontecorvo 5, 56127 Pisa, Italy}
\email{matteo.novaga@unipi.it}

\begin{abstract} 
We show existence of  periodic foams with equal cells in $\R^n$ minimizing an anisotropic perimeter. 
\end{abstract} 
  
% It is required to enter 2010 MSC.
\subjclass{ 
49Q05  % Minimal surfaces
58E12  % Applications to minimal surfaces (problems in two independent variables) 
74E10  % Anisotropy 
}
 
\keywords{Minimal partitions, Kelvin cell, $24$-cell, fractional perimeter}
 
\maketitle 
 
\tableofcontents
 
\section{Introduction}
The Kelvin problem, posed by Lord Kelvin in \cite{thompson} (see also \cites{we,kelvin}) is the problem of finding a partition of $\R^3$ into cells
of equal volume, so that the total area of the surfaces separating them is as small as possible.
%, in other words, finding the optimal soap bubble foam in terms of surface area. 

In this paper we deal with a related  problem in $\R^n$,  that is, finding the minimal foam among all periodic partitions generated by a lattice tiling of $\R^n$. 
A lattice $G$ is a discrete subgroup of $\R^n$ of full rank, and a lattice tiling is given by  $D+G=\R^n$, where $D$ is a fundamental domain associated to the action of $G$. 

The isoperimetric problem can be stated as follows:  among all fundamental domains associated to   lattices of $\R^n$ with fixed volume (where the volume of the lattice is the volume of each  fundamental domain)  show that there exists a domain with minimal perimeter, where the perimeter functional is either the classical isotropic perimeter or an anisotropic one.  The solution of this problem provides a periodic tiling of $\R^n$ with equal cells of minimal perimeter, among all periodic tilings generated by a lattice. 

The starting point of our analysis is the existence of a minimal fundamental domain for a given lattice, which has been proved in great generality 
in \cite{cnparti}, extending previous results in \cites{choe,mnpr}.
Our first result is a compactness property of lattices with volume equibounded from below and with an upper bound on the perimeter of a fundamental domains (see Proposition \ref{Fubini}), which is based on classical compactness results for lattices, such as the Mahler's compactness Theorem \cite{ma} and its generalizations (see \cite{cassels}). The main tool to prove existence of a minimizer is a concentration compactness argument, which is a classical method in order to deal with loss of mass  at infinity in isoperimetric problems in noncompact spaces.
%and dates back to Almgren in this geometric setting. 
 
Once we show existence of minimal periodic partition with equal cells, an interesting question is analyzing the possible explicit structure of these partitions. This problem is quite hard, and it has been solved only in dimension $2$ by Hales \cite{hales}, whereas it remains open in all the other cases. 

In the last section we describe some candidate minimizers in dimension $3,4,8,24$,  motivated by the analogy with related problems such as the optimal sphere packing. Moreover, in the isotropic case we provide some estimates on the asymptotic behavior of the isoperimetric function  $c(n)$ (that is, the value of \eqref{iso} for $m=1$), as the dimension $n$ goes to $+\infty$, and we show that
\[
\sqrt{2\pi e n}\sim n\omega_n^{\frac{1}{n}}\leq c(n)\leq   \frac{ 2 }{(2\zeta(n))^{\frac{1}{n}}}  n \omega_n^{\frac{1}{n}} \sim 2 \sqrt{2\pi e n},
\]
where $\zeta(n)$ is the Riemann zeta-function. 
Note that one side of the estimate is obtained by direct comparison with    the perimeter of the ball of volume $1$, whereas the other side is based on a noncostructive existence result  due to  Minkowski and Hlawka, see Proposition \ref{asi} for more details. 

Finally, we conclude by observing that similar results are expected to hold also for more general perimeter functional, such as the nonlocal perimeters. 
This will be the subject of future investigations.  

\smallskip
{\it Acknowledgements.} The authors are members of INDAM-GNAMPA. The second author was supported by the PRIN Project 2019/24.

\section{Lattices}\label{sec:lat} 
We recall the definition of lattice in $\R^n$.

\begin{definition}[Lattice] 
A lattice is a discrete subgroup $G$ of $(\R^N, +)$ of rank $n$. 
The elements of $G$ can be expressed as $\sum k_iv_i$, for a given basis  $(v_1, \dots, v_n)$ of $\R^N$, with    coefficients $k_i\in \Z$.   Any two bases for a lattice $G$ are related  by a matrix with integer coefficients and determinant equal to $\pm1$.  

The absolute value of the determinant of the matrix of any set of generators $v_i$ is uniquely determined and it is equal to $d(G)\in (0,+\infty)$, which we call {\it volume of the lattice }$G$.
Equivalently, every lattice can be viewed as a  discrete group of  isometries of $\R^N$, and $d(G)$ coincides with the volume of the quotient torus $\R^N/G$. 
Given $m\in (0,+\infty)$, we denote by $\G_m$ the set of all lattices  $G$  such that $d(G)=m$.

We define the {\it minimum distance} $\lambda(G)>0$ in the lattice $G$ as the length of the shortest nonzero element of $G$.
In particular, for every $p,q\in G$, there holds that $|p-q|\geq \lambda (G)$. 
Other important values associated to a lattice $G$  are its {\it packing radius} $\rho_G$  and its \it{covering radius} $r_G$ defined  as 
\begin{eqnarray*} 
\rho_G&:=&\sup\{ r\ :\  \forall x\neq y\in G, B_r(x)\cap B_r(y)= \emptyset\}= \frac{\lambda(G)}{2}\\\nonumber
 r_G&:=&\inf\{ r\ :\  G+B_r=\R^N\}.\end{eqnarray*}
 
\end{definition} 

We also recall the definition of Voronoi cell associated to a lattice.

\begin{definition}[Voronoi cell]\label{voronoi} Given a lattice $G$ of $\R^n$, we define the Voronoi cell associated to $G$ as  
\[V_G:=\{x\in \R^n :\  |x|\leq |x-g| \qquad \forall g\in G, g\neq 0\}. \]
 $V_G$ is a   centrally symmetric and convex polytope, with at most  $2(2^n-1)$ facets. Moreover $V_G+G=\R^n$ and $V_G+g\cap V_G+h=\emptyset$ for $g\neq h, g,h\in G$. 
\end{definition} 

\begin{lemma} \label{lemmaretta} 
A closed subgroup $G$ of $(\R^n, +)$ is discrete if and only if it does not contain a line.
\end{lemma}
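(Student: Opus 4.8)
The forward implication is immediate: if $G$ contains a line, then after translating by a point of the line (which is legitimate since $G$ is a subgroup, so differences of elements of the line lie in $G$) we may assume the line passes through the origin, say $\ell=\{tv:t\in\R\}\subseteq G$ with $v\neq 0$. Then the points $tv$ for $t\to 0$ are nonzero elements of $G$ converging to $0$, so $0$ is not isolated in $G$ and $G$ is not discrete.

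For the converse, I would argue by contraposition: assume $G$ is a closed subgroup that is \emph{not} discrete, and produce a line inside $G$. Since $G$ is a subgroup, it is discrete if and only if $0$ is an isolated point of $G$ (translate a punctured neighbourhood of $0$ to any other point of $G$). Hence non-discreteness gives a sequence $g_k\in G\setminus\{0\}$ with $g_k\to 0$. Passing to a subsequence, by compactness of the unit sphere we may assume $g_k/|g_k|\to v$ for some unit vector $v$. I claim $\R v\subseteq G$. Fix $t\in\R$ and set $m_k:=\lfloor t/|g_k|\rfloor\in\Z$, so that $m_k g_k\in G$. Writing $m_k g_k=(m_k|g_k|)\,\frac{g_k}{|g_k|}$ and noting $|m_k|g_k|-t|\le |g_k|\to 0$, we get $m_k g_k\to t v$. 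Since $G$ is closed, $tv\in G$; as $t$ was arbitrary, $\R v\subseteq G$, so $G$ contains a line.

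The only place closedness of $G$ enters is in this last step (to conclude $tv\in G$ from $m_kg_k\to tv$), and the only slightly delicate point is the simultaneous control of the two factors $m_k|g_k|\to t$ and $g_k/|g_k|\to v$ — this is the crux, though it is a routine limiting argument once the direction $v$ has been extracted by compactness. No genuine obstacle is expected; the statement is elementary, and the proof above is complete modulo these verifications.
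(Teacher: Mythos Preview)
Your proof is correct and is essentially the same argument as the paper's: both extract a limit direction $v$ from normalized differences of a non-isolated point, then use integer multiples $m_k g_k$ with $|m_k|g_k|-t|\le |g_k|$ together with closedness to capture the whole line $\R v$. The only cosmetic difference is that you first reduce to $g_k\to 0$ via the subgroup property, whereas the paper works directly with $g_n-g$; the key estimate and the use of closedness are identical.
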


\begin{proof} 
Clearly, if $G$ is discrete, then it cannot contain a line.
%If $G$ is a lattice, it is straightforward to see that it is closed and moreover that it cannot contain any line, since every the distance between any two points of the lattices is  at least $\lambda(G)$.

Assume that $G$ is not discrete. Then there exists a sequence 
 $g_n$ converging to some $g\in G$, with $g_n\ne g$ for all $n$
 and such that $\frac{g_n-g}{|g_n-g|}\to e$, where $e$ is a unitary vector of $\R^n$. 
 Fix $r\in \R$ and, for all $n$, fix $k_n\in\Z$ such that 
 $\left| k_n |g_n-g|-r\right|\leq |g_n-g|$. Then we have 
 \[|k_n (g_n-g)- r e|\leq |g_n-g|+ |r| \left|\frac{g_n-g}{|g_n-g|}-e\right|,
 \] 
so that the line $\R e$ lies in the closure of $G$. \end{proof} 

The following result shows the existence of a reduced set of generators for lattices (see \cite[Theorem 1]{ma}).

 \begin{lemma}\label{limi} There exists a dimensional constant $C_n$ such that every lattice $G$ admits a set of generators $v_1, \dots, v_n$ 
 with $\Pi_{i=1}^n|v_i|\leq C_n d(G)$. 
 \end{lemma}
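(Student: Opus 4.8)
The plan is to prove the statement by induction on the dimension $n$, peeling off one shortest vector at a time. For $n=1$ the lattice is $G=\Z v$ for some $v\neq 0$, so $v_1:=v$ works with $C_1=1$, since $d(G)=|v|$.

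For the inductive step, I would first pick $v_1\in G\setminus\{0\}$ of minimal length, so $|v_1|=\lambda(G)$. A shortest vector is automatically primitive --- if $v_1=kw$ with $w\in G$ and $|k|\ge 2$, then $0<|w|<|v_1|$ contradicts minimality --- so the orthogonal projection $\pi\colon\R^n\to v_1^{\perp}\cong\R^{n-1}$ maps $G$ onto a genuine lattice $G':=\pi(G)$ of full rank $n-1$, with the volume relation $d(G)=|v_1|\,d(G')$ (the torus $\R^n/G$ fibers over $v_1^{\perp}/G'$ with circle fibers of length $|v_1|$). By the inductive hypothesis, $G'$ has generators $w_2,\dots,w_n$ with $\prod_{i=2}^n|w_i|\le C_{n-1}\,d(G')=C_{n-1}\,d(G)/|v_1|$.

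Next I would lift each $w_i$ to a vector $v_i\in G$ with $\pi(v_i)=w_i$, choosing the lift so that its component along $v_1$ has length at most $|v_1|/2$ (subtract a suitable integer multiple of $v_1$). Then $(v_1,\dots,v_n)$ is a basis of $G$, because the $\pi(v_i)$ generate $G'$ while $v_1$ generates the kernel $G\cap\R v_1=\Z v_1$ (here primitivity of $v_1$ is used again). Writing $|v_i|^2=|w_i|^2+c_i^2$ with $|c_i|\le|v_1|/2$, and using $|v_i|\ge\lambda(G)=|v_1|$, one gets $|w_i|\ge\frac{\sqrt3}{2}|v_1|$ and hence $|v_i|^2\le|w_i|^2+\tfrac14|v_1|^2\le\tfrac43|w_i|^2$ for $i=2,\dots,n$. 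Multiplying,
\[
\prod_{i=1}^n|v_i|\;\le\;|v_1|\Big(\tfrac{2}{\sqrt3}\Big)^{n-1}\prod_{i=2}^n|w_i|\;\le\;\Big(\tfrac{2}{\sqrt3}\Big)^{n-1}C_{n-1}\,d(G),
\]
so the induction closes with $C_n:=(2/\sqrt3)^{n-1}C_{n-1}$.

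The argument involves no analytic difficulty; the only points that require care are the lattice-theoretic facts that a shortest vector is primitive, that this makes $\pi(G)$ a bona fide lattice with $d(G)=|v_1|\,d(\pi(G))$, and that the lifted vectors genuinely generate $G$ rather than a proper finite-index sublattice. A more classical route, matching the cited \cite[Theorem 1]{ma}, would instead invoke Minkowski's second theorem on the successive minima $\lambda_1(G)\le\dots\le\lambda_n(G)$, namely $\lambda_1(G)\cdots\lambda_n(G)\le 2^n d(G)/\omega_n$, together with the standard fact that vectors realizing the successive minima can be modified into a basis $v_1,\dots,v_n$ with $|v_i|\le\max\{1,i/2\}\,\lambda_i(G)$; this gives the estimate with a (larger) dimensional constant of order $n!/\omega_n$. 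I would present the inductive proof as the primary one, since it is entirely self-contained.
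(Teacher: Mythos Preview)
Your inductive argument is correct. The key lattice-theoretic facts you flag are all in order: a shortest nonzero vector $v_1$ is primitive, hence $G\cap\R v_1=\Z v_1$; this forces the orthogonal projection $\pi(G)\subset v_1^{\perp}$ to be a genuine rank-$(n-1)$ lattice with $d(G)=|v_1|\,d(\pi(G))$; and any lifts of a basis of $\pi(G)$, together with $v_1$, form a basis of $G$ (not merely a finite-index sublattice). The length estimate $|v_i|\le(2/\sqrt3)\,|w_i|$ follows exactly as you write from $|v_i|\ge|v_1|$ and $|c_i|\le|v_1|/2$, and the recursion $C_n=(2/\sqrt3)^{\,n-1}C_{n-1}$ closes the induction with $C_n=(2/\sqrt3)^{n(n-1)/2}$.

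The paper itself supplies no proof of this lemma: it only records the statement and cites \cite[Theorem~1]{ma}. So there is nothing to compare against except the reference, and your second paragraph correctly identifies the route taken there (Minkowski's second theorem on successive minima, followed by the standard passage from vectors realizing the minima to an actual basis). Your self-contained inductive proof---essentially the classical Hermite reduction argument---is a legitimate and arguably cleaner alternative; it yields a somewhat smaller constant and avoids invoking Minkowski's convex-body theorem. Either version would be acceptable here, and presenting the inductive one as primary is a reasonable choice.
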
 
 
 We introduce a notion of convergence of lattices see \cites{cassels, ma}.  
 Note that, if $G$ is a lattice, then for every compact  subset $K\subset\R^n$, the set $G\cap K$ is finite.

 \begin{definition} 
 A sequence of lattices $G_k$ converges to $G$ if there exists for all $k$ a set of generators $g_k^i$ of $G_k$ such that $g_k^i\to g^i$, and $g^i$ is a set of generators of the lattice $G$.

A sequence of lattice $G_k$ converges in the Kuratowski sense to $G_i$, if
\[G=\{g\in \R^n \ | \limsup_{i\to +\infty} d(g, G_i)=0\}.\]
Note that $G$ is a closed subgroup of $(\R^n, +)$. 
Actually, the two notion of convergence are equivalent, see \cite[Section V.3, Theorem 1]{cassels}. 
 \end{definition} 
 
We recall the following compactness theorem for lattices due to Mahler \cite[Theorem 2]{ma} (see also \cite[Chapter V]{cassels}). 

\begin{lemma}\label{lemmaconvergence} 
Let $G_i$, $i\in \N$,  be a sequence of lattices and assume that there exist two constants $k,\delta>0$ such that  $\lambda(G_i)\geq \delta>0$ for all $i$ and $d(G_i)=|\R^n/G_i|\geq k>0$ for all $i$.   Then there exists a subsequence $G_{i_n}$ and a lattice $G$ such that $G_{i_n}\to G$, and $\lambda(G)\geq \delta$, $d(G)\geq k$.
\end{lemma}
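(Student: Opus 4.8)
The plan is to deduce the statement from the reduced-basis estimate of Lemma~\ref{limi} by a standard compactness-and-continuity argument; Lemma~\ref{limi} carries the real content, and once it is granted nothing deep remains.

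First, for each $i$ fix a Mahler-reduced set of generators $v^{(i)}_1,\dots,v^{(i)}_n$ of $G_i$, so that $\prod_{j=1}^n|v^{(i)}_j|\le C_n\,d(G_i)$. Since each $v^{(i)}_j$ is a nonzero element of $G_i$, we have $|v^{(i)}_j|\ge\lambda(G_i)\ge\delta$ for all $i$ and $j$; dividing the product bound by the lengths of the other $n-1$ generators gives
\[
\delta\ \le\ |v^{(i)}_j|\ \le\ \frac{C_n\,d(G_i)}{\delta^{\,n-1}}\qquad\text{for all }i,j .
\]
Using that the covolumes $d(G_i)$ are bounded (in the setting of this paper $d(G_i)=m$ is fixed, so this is automatic), all the generators then lie in a fixed compact spherical shell of $\R^n$.

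Next, up to passing to a subsequence we may assume $v^{(i)}_j\to v_j$ in $\R^n$ as $i\to\infty$, for each $j=1,\dots,n$, with $|v_j|\ge\delta$. These limit vectors are linearly independent: since $|\det(v^{(i)}_1,\dots,v^{(i)}_n)|=d(G_i)\ge k>0$ and the determinant is continuous, $|\det(v_1,\dots,v_n)|\ge k>0$. Hence $G:=\bigoplus_{j=1}^n\Z v_j$ is a lattice with $d(G)=|\det(v_1,\dots,v_n)|\ge k$, and by construction $G_i\to G$ in the sense of convergence of generators, equivalently in the Kuratowski sense.

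Finally, to check $\lambda(G)\ge\delta$, take any nonzero $g=\sum_j a_jv_j\in G$, i.e.\ with $a_j\in\Z$ not all zero. Since $v^{(i)}_1,\dots,v^{(i)}_n$ is a $\Z$-basis of $G_i$, the vector $g_i:=\sum_j a_jv^{(i)}_j$ is a nonzero element of $G_i$, so $|g_i|\ge\lambda(G_i)\ge\delta$; letting $i\to\infty$ yields $|g|\ge\delta$, and as $g$ was arbitrary, $\lambda(G)\ge\delta$. The only point that requires attention is the step bounding each individual $|v^{(i)}_j|$: this uses an upper bound on the $d(G_i)$ in addition to the lower bound $d(G_i)\ge k$ (the latter being what keeps the limit nondegenerate); with that in hand the proof is routine.
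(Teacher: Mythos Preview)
The paper does not give its own proof of this lemma; it simply cites Mahler and Cassels. Your argument via Lemma~\ref{limi} (reduced bases) is exactly the classical route, and the steps you carry out---extracting a convergent subsequence of generators, using continuity of the determinant to obtain linear independence and $d(G)\ge k$, and approximating an arbitrary nonzero $g\in G$ by nonzero $g_i\in G_i$ to deduce $\lambda(G)\ge\delta$---are all correct.

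You are also right to flag the missing hypothesis. As stated, the lemma assumes only a \emph{lower} bound $d(G_i)\ge k$, but Mahler's compactness genuinely requires an \emph{upper} bound on the covolumes so that the reduced generators lie in a fixed compact set. Without it the conclusion fails: for instance $G_i=\Z\times i\Z$ in $\R^2$ has $\lambda(G_i)=1$ and $d(G_i)=i$, and no subsequence converges to a rank-$2$ lattice. In every use of this lemma in the paper one has $G_i\in\G_m$, i.e.\ $d(G_i)=m$ is fixed, so the point is harmless; your observation that the upper bound is automatic in that setting is exactly the right way to handle it.
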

 
%\begin{proof}  Let $R>\delta$.   Due to the fact that $\lambda(G_i)\geq \delta$,  $G_i\cap \overline{B(0,R)}$ contains at most $M:=C\left(\frac{R}{\delta}\right)^n$ elements, where $C$ is a dimensional constant.   Up to extracting a subsequence $G_{i_R}$, we may assume that    $G_{i_R}\cap \overline{B(0,R)}$ contains $M_R$ points for some $M_R\leq M$ and that  all the sequences $g_{i_R}^k$ with   $g_{i_R}^k\in G_{i_R}\cap \overline{B(0,R)}$ and $1\leq k\leq M_R$ are converging in $\overline{B(0,R)}$.  We define $G_R$ as the set of all these limits points. Then we conclude by considering a sequence $R_i\to +\infty$ and constructing $G$ with  a diagonalization procedure as $\cup_i G_{R_i}$. So the set $G$ is by construction a group, and moreover again by construction $\lambda(G)\geq \delta$.  Finally since $m$ is the absolute value of the determinant of each matrices of generators of $G_k$, then it is also the  absolute value of the determinant of each matrices of generators of $G$. 
%\end{proof} 

We recall the notion of fundamental domain for the action of a group  $G$.

\begin{definition}[Fundamental domain] We say that $D\subset\R^n$ is a fundamental domain for the group $G$ 
if it is a fundamental domain for the action of $G$ on $\R^n$, that is  
a set which contains almost all representatives for the orbits of $G$ and such that the points
whose orbit has more than one representative has measure zero.  
 We denote by $\D_G$ the set of all fundamental domains of $\R^n$  for the group $G$.  Notice that $|D|=|\R^n/G|$ for all $D\in\D_G$. 
  \end{definition} 
\begin{remark}[Fundamental domains associated to lattices]\upshape \label{remcomp} If $G$ is a lattice, then its Voronoi cell is a fundamental domain. Moreover if $v_1, \dots, v_n$ is a set of generators of 
 $G$ as a $\Z$-module, then  the set $D=\{x=\sum_{i=1}^n t_iv_i, \ t_i\in [0,1)\}$ is a fundamental domain associated to $G$. Indeed it is easy to check, using the fact that  $v_1, \dots, v_n$ is a vectorial basis of $\R^n$, that every element in $\R^n$ can be written as   an element in $D$  translated by an element of the group $G$.
   \end{remark} 
 
We now recall the notion of tiling  of $\R^n$. 

\begin{definition}
A partition or tiling of $\R^n$ is a collection of measurable subsets $\{E_k\}_{k\in\mathbb I}$,  where $\mathbb I$ is either a finite or a countable set of ordered indices, such that 
\begin{enumerate} 
\item $|E_k|>0$ for all $k$,
\item $|E_k\cap E_j|=0$ for all $k\ne j$,
\item $|\R^n\setminus\cup_k E_k|=0$.
\end{enumerate}
\end{definition}
Each fundamental domain $D$ associated to a lattice $G$ induces a  lattice  tiling  of $\R^n$: that is $\R^n=\cup_{g\in G} D+g$.
 Moreover it is possible to prove that  the fundamental domain $D$ is precompact if and only if the $G$-periodic partition $\{D+g\}_{g\in G}$ induced by $D$ is locally finite (see \cite{cnparti}).

 \section{Existence of minimal partitions}\label{sec:ex}
 Let us fix a   spatially homogeneous norm $\phi$ in $\R^n$,   and consider the local (anisotropic) perimeter associated to $\phi$: for every measurable set $E\subseteq\R^n$, we define 
\[\Per_\phi(E)=\int_{\partial^* E} \phi(\nu(x))dH^{n-1}(x)\] where $\partial^* E$ is the reduced boundary of $E$. 
When $\phi(x)=|x|$, we recover the classical perimeter $\Per(E)$. 

We will show existence of a solution of the following isoperimetric problem: let  $m>0$ and consider 
 \begin{equation}\label{iso} \inf_{G\in\G_m} \inf_{D\in\D_G} \Per_\phi(D).
 \end{equation} 
 Observe that if $D$ is a fundamental domain associated to a lattice $G\in \G_m$ which solves the previous isoperimetric problem, then the $G$-periodic partition $\{E_g\}_{g\in G}$, generated by $D$, that is defined as  $E_g = g+ D$, is a partition of equal cells in $\R^n$ with minimal  perimeter among all  partitions of equal cells. 

We start with a compactness result.  
 \begin{proposition}\label{Fubini} 
 Let $G_i$, for $i\in\N$,  be a sequence of lattices in $\G_m$ and let $D_i$ be a fundamental domain for $G_i$.
 If there exists $C>0$ such that $\Per_\phi(D_i)\leq C$ for all $i$, then  there exists $\delta=\delta(C,\phi,m)>0$ such that $\lambda(G_i)\geq \delta$ for all $i$. 
 
 In particular, up to a subsequence, $G_i\to G$ in Kuratowski sense, where $G$ is a lattice in $\G_m$. 
 \end{proposition}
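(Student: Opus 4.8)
The plan is to deduce a uniform lower bound on $\lambda(G_i)$ by a slicing (Fubini) argument in the direction of a shortest vector of $G_i$, and then to invoke Mahler's compactness theorem, Lemma~\ref{lemmaconvergence}.

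First I would pass from $\Per_\phi$ to the Euclidean perimeter: since $\phi$ is a norm, $c_\phi:=\min_{|\nu|=1}\phi(\nu)>0$ and $\Per_\phi(E)\ge c_\phi\,\Per(E)$ for every measurable $E$, so $\Per(D_i)\le C/c_\phi=:C_1$ for all $i$, and in particular each $D_i$ has finite perimeter. Now fix $i$, pick $v\in G_i$ with $|v|=\lambda_i:=\lambda(G_i)$, set $e:=v/\lambda_i$, and split $\R^n=e^\perp\oplus\R e$, writing $x=(y,t)$ with $y\in e^\perp$ and $t\in\R$; for $y\in e^\perp$ let $(D_i)_y:=\{t\in\R:(y,t)\in D_i\}$. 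By the one-dimensional slicing theorem for sets of finite perimeter, for $H^{n-1}$-a.e.\ $y$ the slice $(D_i)_y$ has finite perimeter in $\R$ and
\[
\int_{e^\perp}\#\bigl(\partial^*(D_i)_y\bigr)\,dH^{n-1}(y)\ \le\ \Per(D_i)\ \le\ C_1
\qquad\text{and}\qquad
\int_{e^\perp}\bigl|(D_i)_y\bigr|\,dH^{n-1}(y)\ =\ |D_i|\ =\ m.
\]

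The crucial point — where the hypothesis that $D_i$ is a \emph{fundamental domain} enters — is the slicewise bound $|(D_i)_y|\le\lambda_i$ for a.e.\ $y$. Indeed, if $t_1\ne t_2$ both belong to $(D_i)_y$ with $t_1-t_2=k\lambda_i$ for some nonzero $k\in\Z$, then $(y,t_1)$ and $(y,t_2)$ are two points of $D_i$ differing by $kv\in G_i\setminus\{0\}$. Since, by definition of fundamental domain, the set of $x\in D_i$ admitting some $g\in G_i\setminus\{0\}$ with $x+g\in D_i$ is Lebesgue-null in $\R^n$, Fubini shows that for a.e.\ $y$ the set of such ``ambiguous'' $t\in(D_i)_y$ has one-dimensional measure zero. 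For any such $y$ the canonical projection $\R\to\R/\lambda_i\Z$ is injective on $(D_i)_y$ up to a null set; being a local isometry, it maps $(D_i)_y$ onto a subset of $\R/\lambda_i\Z$ of measure $|(D_i)_y|$, whence $|(D_i)_y|\le\lambda_i$.

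To conclude, let $P_i:=\{y\in e^\perp:|(D_i)_y|>0\}$. For a.e.\ $y\in P_i$ the slice $(D_i)_y$ is nonempty and, having finite measure $\le\lambda_i$, bounded, so $\#\bigl(\partial^*(D_i)_y\bigr)\ge 2$; with the first displayed inequality this gives $2\,H^{n-1}(P_i)\le C_1$, while $m=\int_{P_i}|(D_i)_y|\,dH^{n-1}(y)\le\lambda_i\,H^{n-1}(P_i)$, hence $\lambda_i\ge 2m/C_1=2m\,c_\phi/C=:\delta$, independent of $i$. Since $\lambda(G_i)\ge\delta$ and $d(G_i)=m$ for all $i$, Lemma~\ref{lemmaconvergence} yields a subsequence $G_i\to G$ (in the Kuratowski, equivalently lattice, sense) with $\lambda(G)\ge\delta$ and $d(G)\ge m$; and since the covolume is continuous along this convergence (determinants of converging sets of generators converge), $d(G)=m$, i.e.\ $G\in\G_m$. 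I expect the main obstacle to be the step extracting the one-dimensional inequality $|(D_i)_y|\le\lambda_i$ from the purely set-theoretic fundamental-domain property, together with the careful bookkeeping of the several ``almost every $y$'' reductions (the slicing theorem, the Fubini argument on the null set, the measure preservation of the projection onto the circle); once these are in place, the perimeter estimate and the appeal to Mahler's theorem are routine.
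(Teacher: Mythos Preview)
Your proof is correct and follows essentially the same approach as the paper: reduce to the isotropic perimeter, slice in the direction of a shortest lattice vector to get $|(D_i)_y|\le\lambda_i$ from the fundamental-domain property, combine this with the projection inequality $\Per(D_i)\ge 2H^{n-1}(P_i)$ to obtain $\lambda_i\ge 2m\,c_\phi/C$, and then invoke Mahler's compactness. Your write-up is in fact a bit more careful than the paper's (you spell out the slicing theorem and the origin of the factor~$2$, and you note the continuity of the covolume to conclude $d(G)=m$ rather than just $d(G)\ge m$); the only cosmetic point is that ``finite measure $\Rightarrow$ bounded'' for the one-dimensional slice really uses that the slice also has finite perimeter, but the conclusion $\#\bigl(\partial^*(D_i)_y\bigr)\ge 2$ is unaffected.
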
 
 
\begin{proof} 
It is sufficient to prove the result for the classical (isotropic) perimeter, 
since a uniform upper bound on $\Per_\phi(D_i)$ implies a uniform upper bound on $\Per(D_i)$. 

For every $G_i$ we consider a vector $g_i$ of minimal norm, that is, an element of $G_i$ such that $|g_i|=\lambda(G_i)$. Let $V_i$ be the orthogonal subspace to $g_i$, and let $\pi_i:\R^n\to V_i$ the orthogonal projection. Notice that, since $|D_i\cap (D_i+g_i)|=0$, every line of the type $\{x+tg_i:\,x\in\R^n,\,t\in \R\}$ intersects $D_i$ 
in a set of measure at most $|g_i|=\lambda(G_i)$. Then,
by Fubini-Tonelli Theorem  we have that  $m=|D_i|\le\lambda(G_i)H^{n-1}(\pi_i(D_i))$. 
On the other hand, from the area formula it follows that  $\Per(D_i)\geq 2\,H^{n-1}(\pi_i(D_i))$. 
Therefore we get 
$$\lambda(G_i)\ge \frac{m}{H^{n-1}(\pi_i(D_i))}\geq \frac{2m}{\Per(D_i)}\geq \frac{2m}{C}.$$ 
The conclusion is now a direct consequence of Lemma \ref{lemmaconvergence}. 
\end{proof} 

We recall now the  concentration compactness result which is, together with   lower semicontinuity and compactness properties of the perimeter functional, the standard tool to show existence in isoperimetric problems, since it permits to deal with the possible loss of mass at infinity of minimizing sequences. 
For the proof we refer to \cite[Lemma 3.3, Lemma 3.4]{cnparti}, see also \cite[Theorem 3.3]{npst}.

\begin{lemma}[Concentration compactness]\label{lemmacc}   Let 
$E_k\subseteq \R^n$ be a sequence of measurable sets with   $\sup_k \Per_\phi(E_k)\leq C<+\infty$ and $|E_k|=m$, for some $m, C>0$. 

Then, up to passing to a subsequence,  there exists a sequence  $z_k^i\in \R^n$, $i\in\N, k\in\N$,  with $|z^i_k-z_k^j|\to +\infty$ as $k\to +\infty$ for $i\neq j$ and a family $(E^i)$ of measurable sets in $\R^n$ such that 
\begin{enumerate}
\item  $E_k-z_k^i\to E^i$ locally in $L^1$ as $k\to +\infty$, for all $i$;
\item $\sum_i\Per_\phi(E^i)\leq  \liminf_k\Per_\phi(E_k)$ and  $\sum_i |E^i| =m$. 
\end{enumerate} 
\end{lemma}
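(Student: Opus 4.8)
The plan is to run the standard profile-decomposition (exhaustion) argument for sequences of sets of finite perimeter. Since $\phi$ is a norm, $c_1|\nu|\le\phi(\nu)\le c_2|\nu|$ for some $0<c_1\le c_2$, so $c_1\Per(E)\le\Per_\phi(E)\le c_2\Per(E)$ for every measurable $E$; hence $\sup_k\Per(E_k)\le C/c_1=:C_0$, the sequence $\{\chi_{E_k}\}$ is bounded in $BV(\R^n)$ and thus precompact in $L^1(B_R)$ for every $R$, while both $\Per$ and $\Per_\phi$ are lower semicontinuous under $L^1$-convergence on balls. The engine is a non-vanishing lemma: for all $\mu,M>0$ there is $c_0=c_0(n,\mu,M)>0$ such that any $F\subset\R^n$ with $|F|\ge\mu$ and $\Per(F)\le M$ satisfies $\sup_{y\in\R^n}|F\cap Q(y)|\ge c_0$, where $Q(y)$ is the open unit cube centred at $y$. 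To prove it I would tile $\R^n$ by unit cubes $\{Q_\ell\}$ and set $\eta=\sup_y|F\cap Q(y)|$; if $\eta\le1/2$ then $|F\cap Q_\ell|\le|Q_\ell|/2$ for all $\ell$, so the relative isoperimetric inequality in a cube gives $|F\cap Q_\ell|^{(n-1)/n}\le\kappa_n\Per(F;Q_\ell)$, and summing over the disjoint open cubes yields $\mu\le|F|\le\eta^{1/n}\sum_\ell|F\cap Q_\ell|^{(n-1)/n}\le\kappa_n\eta^{1/n}\Per(F)\le\kappa_nM\,\eta^{1/n}$, hence $\eta\ge(\mu/(\kappa_nM))^n$; together with the trivial case $\eta>1/2$ this proves the lemma.

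Next I would extract the profiles one at a time. Put $E_k^0=E_k$ and suppose inductively (along a subsequence) that I have sets $E^1,\dots,E^N$, points $z_k^1,\dots,z_k^N$ with $|z_k^a-z_k^b|\to\infty$ for $a\ne b$, and residuals $E_k^N=E_k\setminus\bigcup_{a\le N}B_{r_k^a}(z_k^a)$ with $r_k^a\to\infty$, $\limsup_k\Per(E_k^N)\le 2C_0$, and $|E_k^N|\to m_N:=m-\sum_{a\le N}|E^a|$. If $m_N=0$ I stop. Otherwise the non-vanishing lemma (with $\mu$ just below $m_N$ and $M=2C_0$) provides points $z_k^{N+1}$ with $\liminf_k|E_k^N\cap Q(z_k^{N+1})|\ge c_0>0$, and since $E_k^N$ vanishes on each $B_{r_k^a}(z_k^a)$ with $r_k^a\to\infty$, this forces $|z_k^{N+1}-z_k^a|\to\infty$ for all $a\le N$; by $BV$-compactness $E_k^N-z_k^{N+1}\to E^{N+1}$ in $L^1$ on balls, with $|E^{N+1}|\ge c_0>0$. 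Using the coarea identity $\int_0^\infty H^{n-1}(E_k^N\cap\partial B_\rho(z_k^{N+1}))\,d\rho=|E_k^N|\le m$, I would pick the cut radius $r_k^{N+1}$ in a slowly growing window $[\Lambda_k,2\Lambda_k]$, with $\Lambda_k\to\infty$ slow enough that the balls $B_{r_k^a}(z_k^a)$, $a\le N+1$, are eventually disjoint, so that the section $H^{n-1}(E_k^N\cap\partial B_{r_k^{N+1}}(z_k^{N+1}))=o(1)$ and $|E_k^N\cap B_{r_k^{N+1}}(z_k^{N+1})|\to|E^{N+1}|$; then $E_k^{N+1}:=E_k^N\setminus B_{r_k^{N+1}}(z_k^{N+1})$ satisfies $\limsup_k\Per(E_k^{N+1})\le 2C_0$ and $|E_k^{N+1}|\to m_{N+1}=m_N-|E^{N+1}|$. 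A diagonal extraction over $N$ yields one subsequence along which $E_k-z_k^i\to E^i$ in $L^1$ on every ball, for all $i$ (note $E_k$ and $E_k^{i-1}$ coincide on fixed balls about $z_k^i$ for $k$ large), which is (1).

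To obtain (2) I would first note that for fixed $i,R$ the balls $B_R(z_k^1),\dots,B_R(z_k^i)$ are disjoint for $k$ large, so lower semicontinuity of $\Per_\phi(\cdot;B_R)$ gives $\sum_{a\le i}\Per_\phi(E^a;B_R)\le\liminf_k\sum_{a\le i}\Per_\phi(E_k;B_R(z_k^a))\le\liminf_k\Per_\phi(E_k)$; letting $R\to\infty$ and then $i\to\infty$ gives $\sum_a\Per_\phi(E^a)\le\liminf_k\Per_\phi(E_k)$. Likewise $\sum_{a\le i}|E^a\cap B_R|=\lim_k\sum_{a\le i}|E_k\cap B_R(z_k^a)|\le m$, so $\sum_a|E^a|\le m$, and $m_N=m-\sum_{a\le N}|E^a|$ decreases to some $\mu_\infty\ge0$. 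If $\mu_\infty>0$, then for every $N$ the residuals satisfy $|E_k^N|\to m_N\ge\mu_\infty$ and $\limsup_k\Per(E_k^N)\le 2C_0$, so the non-vanishing lemma produces at \emph{every} stage a profile of mass $\ge c_0(n,\mu_\infty,2C_0)>0$, forcing $\sum_a|E^a|=+\infty$, a contradiction; hence $\mu_\infty=0$, i.e. $\sum_a|E^a|=m$ (so the procedure cannot stop with positive residual mass), completing (2).

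The hard part is the bookkeeping in the extraction step: the radii $r_k^{N+1}$ must be chosen to do three incompatible-looking things at once --- tend to infinity (so the new centre separates from the old ones and \emph{all} of the mass of $E^{N+1}$ is recovered in the limit), have spherical section $o(1)$ (so the residual perimeters stay $\le 2C_0$), and keep the removed balls eventually disjoint --- and these choices must be threaded through all stages by a single diagonal subsequence, coupling the growth of $\Lambda_k$ to the mutual distances $|z_k^a-z_k^b|$. Ensuring in particular that $|E_k^N\cap B_{r_k^{N+1}}(z_k^{N+1})|\to|E^{N+1}|$, i.e. that no mass leaks into the annular regions where we cut, is the delicate point, since it is exactly what underlies the identity $|E_k^N|\to m-\sum_{a\le N}|E^a|$ used throughout. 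Everything else --- the norm equivalence, the relative isoperimetric inequality, lower semicontinuity of the anisotropic perimeter, and Fatou's lemma --- is soft.
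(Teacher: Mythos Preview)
The paper does not supply its own proof of this lemma; it merely states the result and refers to \cite[Lemma~3.3, Lemma~3.4]{cnparti} and \cite[Theorem~3.3]{npst}. Your outline is the standard Lions-type profile decomposition for sets of finite perimeter---non-vanishing via the relative isoperimetric inequality on unit cubes, iterated extraction of concentration centres, coarea-chosen cut radii with $o(1)$ spherical sections, and lower semicontinuity of $\Per_\phi$ on families of disjoint balls---and it is correct; this is essentially the argument carried out in those references.

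The bookkeeping you single out as delicate (coordinating the growth of $\Lambda_k$ with the mutual separations $|z_k^a-z_k^b|$ through a single diagonal subsequence, so that all of $|E^{N+1}|$ is captured before cutting) is indeed the only place requiring care, but it is routine once $\Lambda_k$ is taken to grow much more slowly than the minimum mutual distance already secured at stage $N$. One minor simplification: you do not actually need the removed balls $B_{r_k^a}(z_k^a)$ to be pairwise disjoint, since the residual perimeter estimate follows from $\Per(E_k^N\setminus B)\le\Per(E_k^N)+H^{n-1}(E_k^N\cap\partial B)$ regardless of overlaps, and the mass accounting $|E_k^{N+1}|\to m_{N+1}$ only uses that $B_{r_k^{N+1}}(z_k^{N+1})$ eventually contains any fixed compact set while being eventually disjoint from any fixed compact set centred at the earlier $z_k^a$.
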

 
We are now ready to show existence of a minimal partition of  $\R^n$. 
 
 \begin{theorem}\label{thex} 
 For all $m>0$ there exists a lattice $G\in \G_m$ and a fundamental domain $D\in\D_G$ such that
 \[\Per_\phi(D)=\min_{G\in\G_m,\,E\in\D_G} \Per_\phi(E). \]
 \end{theorem}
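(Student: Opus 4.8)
The plan is to run the direct method using a minimizing sequence of pairs $(G_i, D_i)$, extract a limit lattice $G \in \G_m$ via Proposition \ref{Fubini}, and then produce a fundamental domain for $G$ with the right perimeter by means of the concentration compactness Lemma \ref{lemmacc}. First I would fix $m>0$, let $\ell = \inf_{G\in\G_m}\inf_{D\in\D_G}\Per_\phi(D)$, and take a sequence $(G_i, D_i)$ with $G_i\in\G_m$, $D_i\in\D_{G_i}$, and $\Per_\phi(D_i)\to\ell$; in particular $\Per_\phi(D_i)\le C$ for some $C$. By Proposition \ref{Fubini}, up to a subsequence $G_i\to G$ in the Kuratowski sense with $G\in\G_m$, and $\lambda(G_i)\ge\delta>0$ uniformly. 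Since $|D_i|=m$ is fixed, I would then apply Lemma \ref{lemmacc} to the sequence $E_k := D_k$: up to a further subsequence there are translation vectors $z_k^i$ with $|z_k^i-z_k^j|\to\infty$ for $i\ne j$, and measurable sets $E^i$ with $D_k - z_k^i \to E^i$ locally in $L^1$, $\sum_i\Per_\phi(E^i)\le\liminf_k\Per_\phi(D_k)=\ell$, and $\sum_i|E^i|=m$.

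The key point is to rule out splitting, i.e. to show that only one piece $E^i$ is nontrivial and that it has full mass $m$. Here I would exploit the lattice structure: because $\lambda(G_k)\ge\delta$, the minimum distance is bounded below, and by Lemma \ref{limi} each $G_k$ has a reduced set of generators $v_1^k,\dots,v_n^k$ with $\prod_i|v_i^k|\le C_n m$; combined with $\lambda(G_k)\ge\delta$ this bounds each $|v_i^k|$ from above, so after passing to a subsequence $v_i^k\to v_i$, a set of generators of $G$, and $G_k\to G$ also in the sense of convergent generators. Now the crucial observation is that a fundamental domain $D_k$ has the property that the translates $\{D_k + g : g\in G_k\}$ tile $\R^n$; equivalently, for a.e. $x\in\R^n$ there is a unique $g\in G_k$ with $x-g\in D_k$. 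Fix one index, say $i=1$, and set $E:=E^1$, $z_k:=z_k^1$. Since translating $D_k$ by the lattice element of $G_k$ nearest to $z_k$ changes neither $D_k$'s class in $\D_{G_k}$ nor its perimeter, I may assume $z_k$ stays in a bounded region, so actually $D_k\to E$ locally in $L^1$ along the subsequence. The tiling identity $\sum_{g\in G_k}\chi_{D_k+g}=1$ a.e. passes to the limit (using the local $L^1$ convergence $D_k\to E$ and $G_k\to G$, together with the uniform local finiteness coming from $\lambda(G_k)\ge\delta$, which bounds the number of $g$'s meeting any fixed ball) to give $\sum_{g\in G}\chi_{E+g}=1$ a.e., i.e. $E$ is a fundamental domain for $G$. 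In particular $|E|=|\R^n/G|=m$, which forces all other $E^i$ to have zero measure, so $\sum_i\Per_\phi(E^i)=\Per_\phi(E)$.

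Putting the pieces together: $D:=E\in\D_G$ with $G\in\G_m$ and $\Per_\phi(D)=\Per_\phi(E^1)\le\sum_i\Per_\phi(E^i)\le\liminf_k\Per_\phi(D_k)=\ell$, while by definition of $\ell$ we also have $\Per_\phi(D)\ge\ell$, so $\Per_\phi(D)=\ell$ and the infimum is attained. The lower semicontinuity of $\Per_\phi$ under local $L^1$ convergence, which is used implicitly in Lemma \ref{lemmacc}, is standard.

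I expect the main obstacle to be the no-splitting step, specifically verifying rigorously that the tiling identity survives in the limit. One must control the competition between the local $L^1$ convergence $D_k\to E$ (in a fixed large ball) and the fact that the sum $\sum_{g\in G_k}\chi_{D_k+g}$ involves infinitely many translates; the uniform lower bound $\lambda(G_k)\ge\delta$ from Proposition \ref{Fubini} is exactly what makes this sum locally finite with a uniform bound on the number of terms, so that on any bounded set only finitely many $g\in G_k$ contribute and these converge to the corresponding finitely many $g\in G$. A secondary subtlety is the reduction to $z_k$ bounded: one should check that replacing $D_k$ by $D_k - g_k$ for the optimal $g_k\in G_k$ really keeps it a fundamental domain (clear, since $\D_{G_k}$ is translation-invariant under $G_k$) and leaves $\Per_\phi$ unchanged (clear, since $\Per_\phi$ is translation-invariant). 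Everything else is a routine application of the quoted compactness and concentration-compactness results.
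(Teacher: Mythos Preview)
Your overall strategy matches the paper's up to the point where concentration compactness is invoked, but the ``no-splitting'' step contains a genuine gap. The claim that the tiling identity $\sum_{g\in G_k}\chi_{D_k+g}=1$ passes to the limit under local $L^1$ convergence is false in general, and the lower bound $\lambda(G_k)\ge\delta$ does not rescue it. That bound controls how many lattice points lie in a fixed ball, but it does \emph{not} guarantee that the lattice translates $g\in G_k$ for which $(D_k+g)\cap B_R$ carries mass stay in a bounded region as $k\to\infty$. Concretely, take $G_k=\Z$ and $D_k=[0,\tfrac12)\cup[k+\tfrac12,k+1)$. Each $D_k$ is a genuine fundamental domain for $\Z$ with $\Per(D_k)=4$, and after centering at $z_k^1=0$ one has $D_k\to E=[0,\tfrac12)$ locally in $L^1$. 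Yet $\sum_{g\in\Z}\chi_{E+g}$ equals $1$ only on $\bigcup_n[n,n+\tfrac12)$, so $E$ is not a fundamental domain and $|E|=\tfrac12<1$. The point $x=\tfrac34$ is covered, for each $k$, by the translate $D_k+(-k)$, and the relevant lattice element $-k$ escapes to infinity. Thus splitting genuinely occurs and cannot be ruled out by your argument.

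The paper does not attempt to exclude splitting. Instead it replaces each center $z_k^i$ by a nearby lattice element $g_k^i\in G_k$ (using the uniform bound on the generators from Lemma~\ref{limi}), passes to limits $D_k-g_k^i\to D^i$, and then shows that the union $D:=\bigcup_i D^i$ is a fundamental domain for the limit lattice $G$. The key observation is that $g_k^i-g_k^j\in G_k\setminus\{0\}$ for $i\ne j$ and large $k$, so $|(D_k-g_k^i)\cap(D_k-g_k^j)|=0$, which in the limit gives $|D^i\cap D^j|=0$; the same reasoning with an extra $h_k\to g\in G$ yields $|(D^i+g)\cap D^j|=0$ for all $g\ne 0$. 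Combined with $\sum_i|D^i|=m$ this forces $D\in\D_G$, and then $\Per_\phi(D)\le\sum_i\Per_\phi(D^i)\le\liminf_k\Per_\phi(D_k)$ concludes. In the toy example above this procedure produces $D^1=[0,\tfrac12)$ and $D^2=[\tfrac12,1)$, whose union is the correct fundamental domain. The missing idea in your proposal is precisely this reassembly of all the concentration pieces via lattice-adjusted centers.
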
 
 \begin{proof} Let $C=\inf_{G\in\G_m} \inf_{E\in\D_G} \Per_\phi(E)$. If $C=+\infty$ there is nothing to prove; if $C>0$, we consider a minimizing sequence $G_k$ of lattices in $\G_m$, and of fundamental domains $D_k$ associated to $G_k$, with $\Per_\phi(D_k)\leq 2C$. Then by Lemma \ref{Fubini}, up to a subsequence $G_k\to G$ in Kuratowski sense, where $G$ is a lattice in $\G_m$.  We consider a reduced set of generators, which exists due to  Lemma \ref{limi}, $v_k^i$ of $G_k$:  we get that $|v_k^1|\dots |v_k^n|\leq C_n m$. So, 
 since $|v_k^i|\geq \delta$ for all $i$ and $k$, there holds that $|v_k^i|\leq C_n m\, \delta^{1-n}:=R(m,\delta)$ for all $i$ and $k$. 
 
 By Lemma \ref{lemmacc}, there exist $z_k^i\in \R^n$  and $E^i$ measurable sets in $\R^n$, such that $D_k-z_k^i\to E^i$, $|z_k^i-z_k^j|\to +\infty$ as $k\to +\infty$, $\sum_i |E^i| =m$ and  
 $$\sum_i\Per_\phi(E^i)\leq  \liminf_k\Per_\phi(D_k).$$ 
 We observe that for all $i$, we may choose $g_k^i\in G_k$, with $|g_k^i-z_k^i|\leq \frac{\sqrt n}{2}R(m, \delta)$, so that $|g_k^i-g_k^j|\to +\infty$ as $k\to +\infty$, when $i\neq j$.

 Possibly passing to a subsequence,
 we get that there exist  measurable sets in $D^i$ such that $D_k-g_k^i\to D^i$,  $\sum_i |D^i| =m$ and 
 $$\sum_i\Per_\phi(D^i)\leq  \liminf_k\Per_\phi(D_k)=\inf_{G\in\G_m} \inf_{E\in\D_G} \Per_\phi(E).$$ 
 
In order to conclude, we have to prove that $D:=\cup_i D^i$ is a fundamental domain for the limit group $G$. First of all we prove that $|D^i\cap( D^j+g)|=0$ for all $i\neq j$, and for all $g\in G$, and that $|(D^i+g)\cap D^i|=0$ for all $g\in G$, $g\neq 0$.  This would imply, together with the fact that $\sum_i |D^i|=m$, that $\cup_i D^i=D$ is a fundamental domain associated to $G$ (see \cite[Lemma 2.2]{cnparti}), and then that $D$ is a solution to the isoperimetric problem \eqref{iso}. 

Notice that, since $D_k$ are fundamental domains, we have 
$$|D_k \cap (D_k-g_k^j+g_k^i)|=0 \qquad \text{for all $i\neq j$.}$$
Passing to the limit as $k\to +\infty$, we then get $|D^i\cap D^j|=0$. 

Let now $g\in G$, with $g\neq 0$. So $g=\lim h_k$, for some $h_k\in G_k$, with $h_k\neq 0$. Therefore, $D_k-g_k^i+h_k\to D^i+g$ in $L^1_{loc}$.  Moreover, since $D_k$ are fundamental domains associated to $G_k$, there holds $|(D_k-g_k^i+h_k)\cap (D_k-g_k^j)|=0$ for all $k, i,j$. 
Passing to the limit as $k\to +\infty$, we then get $|(D^i+g)\cap D^j|=0$ for all $g\neq 0$, $i,j$.   
 \end{proof} 
 
We now state  the regularity of minimizers of \eqref{iso}, which is a consequence of the fact that the $G$-periodic partition generated by a minimal fundamental domain is a local minimizer of the perimeter functional. For the proof we refer to \cite[Theorem 4.8, Theorem 4.9]{cnparti}.  
 
 \begin{corollary} 
 Let $D\in \D_G$ be a solution to problem \eqref{iso} given by Theorem \ref{thex}, and assume that $\phi^2$ is uniformly convex and of class $C^2$.
 %\in \D_G$, where $G\in \G_m$, such that $\Per_\phi(D)=\min_{H\in\G_m} \min_{E\in\D_G} \Per_\phi(E)$. 
 Then $\partial D$ is a $C^{1,\alpha}$ hypersurface, for some $\alpha\in (0,1)$, up to a closed singular set $\Sigma\neq \emptyset$ 
with $\mathcal{H}^{n-1}(\Sigma)=0$. 
 Moreover, in the isotropic case $\phi(x)=|x|$, the $G$-periodic partition generated by a minimal fundamental domain $D$ is locally finite, $D$ is bounded, 
 and $\partial D\setminus \Sigma$ is of class $C^\infty$. Finally, if $n=2$, then $D$ is a centrally symmetric hexagon.
 \end{corollary}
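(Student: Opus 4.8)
The plan is to deduce all the assertions from the fact that the $G$-periodic partition $\mathcal E=\{D+g\}_{g\in G}$ generated by a minimal fundamental domain $D$ is a \emph{local minimizer} of $\Per_\phi$, in the sense that $\Per_\phi(\mathcal E;B)\le\Per_\phi(\mathcal F;B)$ for every partition $\mathcal F$ of $\R^n$ agreeing with $\mathcal E$ outside a ball $B$, and then to apply the regularity theory for such minimizers. For the local minimality I would argue by a lifting: given $\mathcal F$ agreeing with $\mathcal E$ outside $B=B_r(x_0)$ with $r<\lambda(G)/2$, only the finitely many cells $D,D+g_1,\dots,D+g_N$ meeting $B$ are modified, and since $|g_i-g_j|\ge\lambda(G)$ these modifications can be pulled back --- translating the $i$-th modified cell by $-g_i$ --- to a modification of the \emph{single} cell $D$ supported in the pairwise disjoint balls $B-g_i$. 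One checks (cf.\ \cite[Lemma~2.2]{cnparti}) that this produces a competitor $\widetilde D\in\D_G$ with $|\widetilde D|=|\R^n/G|=m$ automatically and $\Per_\phi(\widetilde D)-\Per_\phi(D)=2\big(\Per_\phi(\mathcal F;B)-\Per_\phi(\mathcal E;B)\big)$ (interfaces counted once); minimality of $D$ in \eqref{iso} then gives the claim, with no volume constraint entering. In particular the first variation of $\Per_\phi$ at $\mathcal E$ vanishes, so each interface of $\mathcal E$ has zero anisotropic mean curvature.

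Granting this, I would invoke the regularity theory for local minimizers of the anisotropic perimeter and for minimal clusters --- the anisotropic De Giorgi--Federer interior regularity (here one uses that $\phi^2$ is uniformly convex and of class $C^2$) together with the analysis of the junction set, as in \cite[Thms.~4.8--4.9]{cnparti} --- to conclude that each interface of $\mathcal E$, and in particular $\partial D$, is a $\phi$-minimal $C^{1,\alpha}$ hypersurface away from a relatively closed set $\Sigma$, the union of the junction set (of Hausdorff dimension at most $n-2$) and the interior singular set of the interfaces, so that $\mathcal H^{n-1}(\Sigma)=0$. That $\Sigma\neq\emptyset$ I would check as a topological obstruction: if $\partial D$ were everywhere a $C^1$ hypersurface, then near each boundary point exactly two cells of $\mathcal E$ meet, so every connected component $C$ of $\partial D$ --- compact, by the density estimates for local minimizers --- is shared in its entirety with a single translate $D+g_C$, $g_C\in G\setminus\{0\}$; being a compact embedded hypersurface, $C$ bounds a region of $\R^n$, and chasing the resulting nesting (one cell inside the region bounded by one of its facets, which lies inside the region bounded by one of its facets, and so on) produces infinitely many pairwise disjoint translates of $D$, each of volume $m$, inside a bounded set --- which is absurd.

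It remains to treat the isotropic case $\phi(x)=|x|$. Since now $\phi$ is analytic, the $\phi$-minimal interfaces are real-analytic, so $\partial D\setminus\Sigma$ is $C^\infty$. Moreover the density estimates for perimeter minimizers give two-sided volume bounds and the bound $\Per(D;B_r(x))\le Cr^{n-1}$ at every boundary point; together with $|D|=m$ and a truncation argument --- relocating distant, low-density portions of $D$ near the bulk strictly decreases the perimeter unless there are none --- this forces $D$ to be bounded, hence precompact, so by the equivalence recalled at the end of Section~\ref{sec:lat} the partition $\mathcal E$ is locally finite. Finally, for $n=2$ the $\phi$-minimal interfaces are straight segments, so $D$ is a convex polygon that tiles $\R^2$ by translations; by the classical description of such polygons $D$ is a parallelogram or a centrally symmetric hexagon, and a comparison --- Hales' honeycomb theorem \cite{hales} in the isotropic case, where it moreover identifies $D$ with the regular hexagon --- excludes the parallelogram. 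The main difficulty throughout is the regularity input of the second paragraph: $C^{1,\alpha}$ regularity of anisotropic minimal clusters up to a singular set of vanishing codimension-one measure is a delicate result, imported here from \cite{cnparti} and the minimal-cluster literature; among the genuinely two-dimensional conclusions, the flatness of the interfaces beyond the segment structure (that a parallelogram is never optimal) is exactly where the planar honeycomb theorem is needed.
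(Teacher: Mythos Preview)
Your approach is essentially that of the paper: the corollary is stated there without a self-contained proof, the authors simply noting that the $G$-periodic partition generated by $D$ is a local minimizer of $\Per_\phi$ and referring to \cite[Theorems~4.8--4.9]{cnparti} for all the regularity consequences. Your sketch fleshes out the local-minimality lifting and the remaining claims (nonemptiness of $\Sigma$, boundedness, the planar case) along the expected lines, so there is nothing substantive to compare.
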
 
 
 %Finally, in the isotropic setting, we have also that minimal fundamental domains has connected closure, in measure theoretic sense, that is it has indecomposable complement. 
 We recall the notions of decomposable and indecomposable set (see \cite{ac}).
 
 \begin{definition}
 Let $E$ be a set of finite perimeter. $E$ is decomposable if there exist $A, B\subset E$ subsets such that $|A|, |B|>0$, $|A|+|B|=|E|$
 and $\Per(E)=\Per(A)+\Per(B)$. We say that $E$ is indecomposable if it is not decomposable. \end{definition}   
 
 \begin{corollary} 
Let $\phi(x)=|x|$ and let $D\in \D_G$ be a solution to problem \eqref{iso}. Then $\R^n\setminus D$ is indecomposable.
If $n\in\{2,3\}$, then $D$ is also  indecomposable.
\end{corollary}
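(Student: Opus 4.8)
The plan is to prove each statement by contradiction: assume a perimeter-additive splitting and derive either a fundamental domain of $G$ with strictly smaller perimeter (contradicting minimality of $D$), or a purely combinatorial impossibility. I will use two preliminary facts. First, by the previous Corollary, in the isotropic case $D$ is bounded and the tiling $\mathcal T=\{D+g\}_{g\in G}$ it generates is locally finite; in particular its adjacency graph (vertices $G$, with $g\sim g'$ iff $H^{n-1}(\partial^*(D+g)\cap\partial^*(D+g'))>0$) is the Cayley graph $\Gamma=\mathrm{Cay}(G,\mathcal N)$ for a finite symmetric generating set $\mathcal N$ (finite by local finiteness and boundedness of $D$; generating because $\Gamma$ is connected, $\R^n$ being connected). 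Second, if $E=A\sqcup B$ up to null sets and $\Per(E)=\Per(A)+\Per(B)$, then $\partial^*A$ and $\partial^*B$ are $H^{n-1}$-essentially disjoint with $\partial^*E=\partial^*A\cup\partial^*B$ up to an $H^{n-1}$-null set (two disjoint finite-perimeter sets sharing a reduced-boundary point must have opposite measure-theoretic normals there, so their gradients would partially cancel), whence $\partial^*A,\partial^*B\subseteq\partial^*E$; conversely $\Per(A\cup C)=\Per(A)+\Per(C)-2H^{n-1}(\partial^*A\cap\partial^*C)$ for disjoint $A,C$ of finite perimeter.

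For the indecomposability of $D$: assume $D=A\sqcup B$ with $|A|,|B|>0$ and $\Per(D)=\Per(A)+\Per(B)$. The key point is to produce some $g_0\in G\setminus\{0\}$ with $H^{n-1}(\partial^*A\cap\partial^*(B+g_0))>0$. Granting this, $D':=A\cup(B+g_0)$ is a disjoint union ($D\cap(D+g_0)$ is null since $g_0\ne0$) carrying exactly one representative of each $G$-orbit, so $D'\in\D_G$, and by the formula above $\Per(D')=\Per(D)-2H^{n-1}(\partial^*A\cap\partial^*(B+g_0))<\Per(D)$, contradicting that $D$ achieves the infimum in \eqref{iso}. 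To get $g_0$, I would suppose on the contrary that $H^{n-1}(\partial^*A\cap\partial^*(B+g))=0$ for all $g\in G$ (the case $g=0$ being the splitting hypothesis) and look at the $G$-periodization $\widetilde A:=\bigcup_{g\in G}(A+g)$, which has locally finite perimeter since $\mathcal T$ does. Using $\partial^*A\subseteq\partial^*D$ one checks, cell by cell, that $H^{n-1}$-a.e.\ point of $\partial^*(A+g)$ is a point of density $1$ of $\widetilde A$ (the two cells meeting it are $D+g$ and some $D+g'$, filled near that point by $A+g$ and $A+g'$ respectively — the option $B+g'$ being excluded by the contrary assumption). Hence $H^{n-1}(\partial^*\widetilde A)=0$, so $\chi_{\widetilde A}$ has vanishing gradient and $\widetilde A$ is null or conull; but $\widetilde A$ and $\R^n\setminus\widetilde A$ each contain infinitely many pairwise disjoint translates of $A$, resp.\ of $B$, hence both have infinite measure — a contradiction. (For $n=2$ this is anyway immediate, $D$ being a convex hexagon; and the argument just sketched does not really seem to require $n\le3$.)

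For the indecomposability of $\R^n\setminus D$: assume $\R^n\setminus D=A\sqcup B$ with $|A|,|B|>0$ and perimeter additivity. Then $\partial^*A,\partial^*B\subseteq\partial^*D$, a bounded set; since $\R^n$ minus a large ball is connected for $n\ge2$, $\chi_A$ is constant there, so after relabelling $B$ is bounded. Because $\partial^*B\subseteq\partial^*D$ cannot meet the measure-theoretic interior of any cell, $B$ has no reduced boundary inside any cell; since every cell is indecomposable by the previous step, $B$ must be a finite union of whole cells, $B=\bigsqcup_{g\in S}(D+g)$ with $\emptyset\ne S\subseteq G\setminus\{0\}$ finite, and (by essential disjointness of $\partial^*A,\partial^*B$) no cell indexed by $S$ is adjacent to one indexed outside $S\cup\{0\}$. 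In the graph $\Gamma$ this says precisely that deleting the vertex $0$ disconnects the finite nonempty set $S$ from the infinite remainder, i.e.\ $0$ is a cut vertex of $\Gamma$. I expect this last reduction to be the main obstacle: one must rule out that removing a single cell disconnects a genuinely $n$-dimensional ($n\ge2$) periodic tiling. The cleanest way is to note that $\Gamma$ is connected, locally finite and vertex-transitive; a connected locally finite vertex-transitive graph with a cut vertex has, by transitivity, all vertices as cut vertices, hence admits a nontrivial tree-of-blocks decomposition and so has at least two ends; but $\Gamma$ is quasi-isometric to $\R^n$, which is one-ended for $n\ge2$ — a contradiction. (A more elementary route would try to contradict minimality by ``filling the hole'' $D\cup B$, but that bounded cluster has volume $(\#S+1)\,m\ne m$ and is not a fundamental domain of $G$; turning it into one would need $S\cup\{0\}$ to be a complete set of residues for a sublattice, which is unclear, so the graph-theoretic finish seems the safer one.)
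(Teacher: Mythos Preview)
Your proof is correct and takes a genuinely different route from the paper on both claims.

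For the indecomposability of $D$, the paper translates one piece of a splitting $D=D_1\sqcup D_2$ to obtain another minimal fundamental domain $D_g=D_1\cup(D_2+g)$ whose boundaries touch, and then invokes Taylor's classification of singular minimal cones in dimensions $2$ and $3$ to rule out the resulting locally disconnected phase. Your argument instead shows directly that some $g_0\ne 0$ makes $H^{n-1}(\partial^*A\cap\partial^*(B+g_0))>0$ --- via the nice observation that otherwise the periodization $\widetilde A=\bigcup_g(A+g)$ would have $H^{n-1}$-null essential boundary --- so that $A\cup(B+g_0)$ is a fundamental domain of strictly smaller perimeter. This bypasses the regularity theory entirely and, as you noticed, works in every dimension; you have in fact proved more than the stated Corollary.

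For the indecomposability of $\R^n\setminus D$, the paper argues directly with the indecomposable \emph{components} of $D$ (not assuming $D$ itself indecomposable, so that this claim is established in all dimensions independently of the second one): a bounded component of $\R^n\setminus D$ is a finite union of translates of such components, which then leads to a contradiction with minimality. Your route instead first uses your all-dimensional indecomposability of $D$ to see that a bounded component $B$ of $\R^n\setminus D$ is a finite union of \emph{whole} cells, and then reduces to the graph-theoretic statement that a Cayley graph of $\Z^n$, $n\ge 2$, has no cut vertex. That reduction is clean, and the graph fact is correct (in a one-ended locally finite vertex-transitive graph, if $0$ were a cut vertex with finite part $S_0$, then for $v\in S_0$ transitivity gives $|S_v|=|S_0|$; but the infinite part $T$ of $\Gamma\setminus\{0\}$ stays connected in $\Gamma\setminus\{v\}$ and contains a neighbour of $0$, forcing $S_v\subseteq S_0\setminus\{v\}$, a contradiction). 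Note only that the logical order of the two claims is reversed in your argument: you need indecomposability of $D$ (in all dimensions) as input to your proof for $\R^n\setminus D$, whereas the paper proves the latter first and independently.
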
 
 
 \begin{proof}
 Let us assume by contradiction that $\R^n\setminus D$ is decomposable.
 By regularity, both $D$ and $\R^n\setminus D$ are union of a finite number of indecomposable components. Moreover, 
since $D$ is bounded,  there exists only one unbounded indecomposable component of $\R^n\setminus D$ and all the others are bounded. 
Let $A$ be a bounded component of $\R^n\setminus D$. Since $D$ is a fundamental domain,
the component $A$ is covered by translations of indecomposable components of  $D$, that is,
$A=\cup_{i=1}^N D_i+g_i$, with $g_i\in G$. 

Let now $n\in\{2,3\}$, and assume by contradiction that we can write $D=D_1\cup D_2$, with $|D_1|, |D_2|>0$, $|D_1|+|D_2|=|D|$
 and $\Per(D_1)+\Per(D_2)=\Per(D)$. Observe $D_g=D_1\cup (D_2+g)$ is a fundamental domain 
 for all  $g\in G$, and there exists $g$ such that $\partial D_1\cap \partial(D_2+g)\ni x\in\R^n$.
 In particular $D_g$ induces a minimal partition which has a disconnected phase in $B_r(x)$ (given by $D_g\cap B_r(x)$) for all $r>0$.
 This is in contradiction with the complete classification of singular point which is available in dimensions $2, 3$ (see \cite{taylor}).
\end{proof} 
  
An interesting question, which remains unsolved, is if a minimal domain $D$ is necessarily indecomposable or if its interior is homeomorphic to a ball.

\section{Some remarks in the isotropic case} \label{sec:rem}
We briefly discuss some candidate solutions to the isoperimetric problem \eqref{iso}, in the isotropic case $\phi(x)=|x|$.

First of all, we recall that the problem admits a unique solution in $\R^2$ due to Hales \cite{hales} (see also \cites{morgansolo,morgan}), given by a regular hexagon. We mention that the result by Hales is much stronger, since it gives uniqueness of the minimizer in a large class of competitors.

\begin{theorem}[Hales honeycomb theorem] 
Any partition of $\R^2$ into regions of equal area has average perimeter at least that of the regular hexagonal honeycomb tiling.
\end{theorem}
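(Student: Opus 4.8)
The plan is to follow the classical strategy for the honeycomb inequality, which combines a sharp isoperimetric-type estimate for individual cells with a clever convexification trick that handles nonconvex regions. First I would reduce to a periodic setting: by a standard truncation/averaging argument one restricts attention to a partition that is invariant under a large lattice, so that the ``average perimeter'' is a well-defined quantity equal to (twice) the total length of the interfaces inside a fundamental domain divided by the number of cells. After this reduction it suffices to prove that the average, over all cells in the period, of the quantity $\Per(E_k)$ is at least the perimeter $P_6 = 2\sqrt[4]{12}$ of a unit-area regular hexagon — here I am normalizing all cells to have unit area, and counting each interface once per adjacent cell so that $\sum_k \Per(E_k)$ is twice the interface length.

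The heart of the argument is a pointwise inequality for a single cell $E$ of area $A$ that is bounded by finitely many circular arcs (curvature comes from the fact that minimal interfaces are arcs of circles, by the first variation; for the inequality one simply allows arbitrary arcs). The claim is that
\[
\Per(E) \ \geq \ \sqrt{A}\,\bigl(P_6/\sqrt{1}\bigr)\cdot \theta(A) \ - \ (\text{correction from the arcs' areas}),
\]
more precisely Hales' inequality has the shape $\operatorname{len}(\partial E) \geq F(A) - \frac{1}{2}\sum (\text{signed areas of the circular segments})$, where $F$ is an explicitly constructed function, concave, agreeing with the hexagonal value and its derivative at $A = 1$, and lying below the true isoperimetric profile of $n$-gons with straight edges for $n \le 6$. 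I would construct $F$ exactly as Hales does: define it piecewise, using the perimeter of the regular $n$-gon of area $A$ on appropriate subintervals, then replace it on $[A^*,\infty)$ by the tangent line at $A=1$ to make it concave and affine for large areas. One checks (this is a finite computation) that $F(A) \geq$ perimeter of any convex polygon with at most $6$ sides and area $A$.

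The key steps, in order, are then: (1) perform the periodic reduction and normalize cells to unit area; (2) for each cell, write $\operatorname{len}(\partial E_k) = \operatorname{len}(\operatorname{hull-edges}) + \operatorname{(arc excess)}$ and split off the circular segments, so that $\operatorname{area}(\operatorname{hull}(E_k)) = \operatorname{area}(E_k) + \sum(\text{bulging-out segments}) - \sum(\text{bulging-in segments})$; (3) apply the polygon estimate $\operatorname{len}(\operatorname{hull-edges}) \geq F(\operatorname{area of hull})$ — valid because, crucially, each cell of the partition has at most $6$ neighbours \emph{on average}, hence at most $6$ hull edges on average, by Euler's formula for the planar graph dual to the partition; (4) use concavity of $F$ to pass from the hull area back to $\operatorname{area}(E_k) = 1$ plus a linear-in-segment-area error, and observe that each arc is shared between two cells and contributes with opposite signs, so the segment corrections cancel when summed over all cells in a period; (5) conclude $\frac{1}{N}\sum_k \operatorname{len}(\partial E_k) \geq F(1) = P_6$, i.e. the hexagonal bound.

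The main obstacle — the genuinely delicate point, and the reason this theorem resisted for a century — is step (3)–(4): a single cell can have many more than six sides, and can be wildly nonconvex, so one cannot apply the hexagonal isoperimetric inequality cell by cell. Hales' resolution is the ``hexagonal isoperimetric inequality'' applied to the \emph{convex hull}, together with the chord/arc bookkeeping that trades the curvature of interfaces against enclosed area; making the correction terms telescope to zero globally while keeping $F$ both concave and a valid lower bound for all $n \le 6$ polygons requires choosing $F$ with surgical care, and verifying the polygon inequality $\operatorname{len} \geq F(\operatorname{area})$ for digons, triangles, \dots, hexagons is where the real case analysis lives. I would present steps (1), (2), (5) in full and cite Hales \cite{hales} for the construction of $F$ and the verification in (3)–(4), since reproducing that verification in detail is exactly the content of his paper and is beyond what is needed here.
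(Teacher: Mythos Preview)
The paper does not prove this statement at all: the Hales honeycomb theorem is merely quoted, with a citation to \cite{hales}, as background motivation for the problem studied in the paper. There is therefore no proof in the paper to compare your proposal against.

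That said, your sketch is a reasonable outline of Hales' own argument --- the reduction to a finite/periodic cluster, the per-cell isoperimetric inequality with signed circular-segment corrections, the use of Euler's formula to bound the \emph{average} number of edges by six, and the concavity/cancellation mechanism are indeed the structural pillars of \cite{hales}. One caution: the framing via the convex hull in your step~(2)--(3) is not quite how Hales proceeds; he does not pass to the convex hull of each cell but rather replaces each arc by its chord and tracks the signed area of the resulting circular segments directly, and the delicate function (your $F$) is built to control the chord-polygon, not the hull. The distinction matters because a cell can be highly nonconvex and its hull can have far fewer edges than the cell has arcs, so the bookkeeping you describe would not line up with the Euler count. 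If you intend to present this sketch, you should rephrase steps~(2)--(3) in terms of chord replacement rather than convex hulls, and --- as you already acknowledge --- defer the construction and verification of the truncation function to \cite{hales}.
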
 

In three dimensions, the variational problem corresponding to the surface minimizing partition
into cells of equal volume is known as the  Kelvin problem \cite{kelvin}. 
%The three-dimensional honeycomb cell (that is the hexagonal prism
%built on a base of three congruent rhombuses)  is not a minimizer (see \cite{ft}).  
Lord Kelvin formulated a conjecture about the explicit shape of the minimizer, based on the bitruncated cubic honeycomb:
%Voronoi cell of the dual lattice of the lattice giving the densest sphere packing, which gives in $\R^3$  truncated octahedra. 
a small deformation of the faces produces a minimal partition, which is Kelvin's proposed solution, see \cite{we} for an more explicit description of this cell.
% Indeed the construction of a minimal tessellation of $\R^n$ is related with the  sphere packing problem (that is  the densest packing of $\R^n$ with unit
%balls), which has been solved only for $n=2,3,8,24$.  
Weaire and Phelan \cite{w} provided  a three-dimensional foam of equal-volume cells, containing two different shapes, 
and  strictly less surface area than the Kelvin's partition.  However, it remains unknown if Kelvin's foam is a minimizer among foams with equal cells, 
or if it is a solution to problem \eqref{iso}.

\smallskip

In $\R^4$ there is a well known regular tessellation, which is  the $24$-cell honeycomb.
The $24$-cell  is the convex hull of its vertices which can be described as the $24$ coordinate permutations of $(\pm 1, \pm1, 0,0)$.
 In this frame of reference the $24$-cell has edges of length $\sqrt{2}$ and is inscribed in a $3$-sphere of radius $\sqrt{2}$. 
 Up to a rescaling, it is also the Voronoi cell of the $D_4$ lattice.
 %A dual realization of the $24$-cell is obtained by considering the vertices     given by the $8$ coordinate permutations of $(\pm1, 0,0,0)$ and by the $16$ coordinate permutations of $(\pm 1/2, \pm1/2,\pm1/2,\pm1/2)$. 
 
If a sphere is inscribed in each $24$ cell of the $24$-cell honeycomb, the resulting arrangement is the densest known regular sphere packing in four dimensions, with kissing number $24$, even if the sphere packing problem  is still unsolved in dimension $4$.  
This suggests that the $24$-cell could be a solution to problem \eqref{iso}. 

In this direction, we observe that 
 the $24$-cell honeycomb satisfies the following local minimality property, which is a necessary condition for being a solution to problem \eqref{iso}.
%Moreover, we have also the following result, based on the fact, proved by Brakke, that cones on cubes are minimal in dimension strictly bigger than $3$. 

\begin{proposition} 
The  $24$-cell honeycomb  is a minimizer for the perimeter in every ball $B_r(x)$, with $x\in \R^4$ and $0<r<\sqrt{2}$.  
\end{proposition}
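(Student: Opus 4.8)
The plan is to present the $24$-cell honeycomb as the Voronoi tessellation of the lattice $\Lambda=\sqrt2\,D_4$, so that each cell is a regular $24$-cell with edge length $\sqrt2$, circumradius $\sqrt2$ and inradius $1$, and to prove the stated local minimality by constructing a \emph{paired calibration} for the partition inside $B_r(x)$. Writing $\Gamma$ for the union of the cell facets, $\Gamma$ is contained in the hyperplanes $\{y:y\cdot v=p\cdot v+\tfrac12|v|^2\}$ with $p\in\Lambda$ and $v$ a minimal vector of $\Lambda$. By the standard paired-calibration criterion it suffices to assign to each cell $E_p$ a divergence-free field $S_p$ on $B_r(x)$ such that on the common facet of two adjacent cells $E_p,E_q$, with unit normal $\nu_{pq}$ from $E_p$ to $E_q$, one has $(S_p-S_q)\cdot\nu_{pq}=1$, while $|S_p-S_q|\le 1$ everywhere in $B_r(x)$; indeed, for any competitor partition $\{F_k\}$ agreeing with the honeycomb on $\partial B_r(x)$, regrouping the boundary integrals facet by facet and using $\div S_p=0$ gives
\[
\Per(\{F_k\};B_r(x))\ \ge\ \sum_{p<q}\int_{\partial^*F_p\cap\partial^*F_q}(S_p-S_q)\cdot\nu_{pq}\ =\ -\sum_p\int_{F_p\cap\partial B_r(x)}S_p\cdot\nu\ =\ \Per(\{E_k\};B_r(x)),
\]
the last quantity being unchanged when $F$ is replaced by $E$ (same trace on $\partial B_r(x)$) and equality holding throughout for $E$.

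The role of $r<\sqrt2$ is to make the construction local and periodic. Only finitely many cells meet $\overline{B_r(x)}$, and a competitor can create an interface between $F_p$ and $F_q$ inside $B_r(x)$ only if $\overline{E_p}$ and $\overline{E_q}$ both meet $\overline{B_r(x)}$; since this ball has diameter $<2\sqrt2$ and the cells have circumradius $\sqrt2$, such pairs lie within a bounded number of lattice steps of one another, so by periodicity of the honeycomb it is enough to produce one divergence-free field $S_0$ (with $S_p(\cdot)=S_0(\cdot-p)$) satisfying the facet identity and the bound $|S_0(y)-S_0(y-w)|\le1$ for $y\in\R^4$ and $w$ ranging over a fixed finite set of short lattice vectors. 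On the open star of a $3$-, $2$- or $1$-dimensional face this is routine: near a facet the interface is flat and $S_p=\pm\tfrac12\nu$ works; near a $2$-face three cells meet at $120^\circ$ with centres at the vertices of an equilateral triangle in the normal plane, and one takes the classical constant calibration of the planar triple junction tensored with zero in the tangential directions; near an edge four cells meet with centres at the vertices of a regular tetrahedron in the normal $3$-space, so the interface is a line times Taylor's minimizing cone on the $1$-skeleton of a regular tetrahedron, which carries an explicit constant calibration, again tensored with zero along the edge.

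The main obstacle is the behavior at the $0$-dimensional faces. At a honeycomb vertex $v$ exactly eight cells meet, with centres (after translation) at the vertices $\pm\sqrt2\,e_1,\dots,\pm\sqrt2\,e_4$ of a cross-polytope; $E_{\sqrt2\sigma e_a}$ and $E_{\sqrt2\tau e_b}$ are adjacent precisely when $a\ne b$, their common facet lying in $\{x:\tau x_b=\sigma x_a\}$ with normal $(\tau e_b-\sigma e_a)/\sqrt2$. A direct computation shows that for \emph{constant} fields the facet identities together with the bound $|S_p-S_q|\le1$ force $S_{\sqrt2\sigma e_a}-S_{\sqrt2\tau e_b}=(\tau e_b-\sigma e_a)/\sqrt2$ whenever $a\ne b$, hence $S_{\sqrt2 e_a}-S_{-\sqrt2 e_a}=-\sqrt2\,e_a$, of norm $\sqrt2>1$ — so no constant calibration exists near a vertex. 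One must therefore let the fields vary near $v$, exploiting that the antipodal cells $E_{\sqrt2 e_a}$ and $E_{-\sqrt2 e_a}$ are \emph{not} adjacent (so their difference is constrained only by the inequality) and that the interface between them is a single point, so the fields need not be continuous at $v$, which carries no distributional divergence. The heart of the proof is thus the explicit construction, using the full symmetry of the cross-polytope vertex figure, of eight uniformly bounded divergence-free fields realising the twenty-four facet identities around $v$ while keeping all $\binom{8}{2}$ pairwise differences inside the closed unit ball. Granting this, the local fields for the four types of strata patch into a global, periodic family $\{S_p\}_{p\in\Lambda}$, and the calibration inequality above gives the claim.
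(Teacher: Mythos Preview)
Your route is different from the paper's and, as written, leaves a genuine gap. The paper does not attempt to build a paired calibration. It simply observes that, because the edge length of the honeycomb is $\sqrt2$, for $r<\sqrt2$ the interface inside $B_r(x)$ coincides (after a translation) with exactly one of four model cones in $\R^4$: a hyperplane, three half-hyperplanes at $120^\circ$, the cone over the $1$-skeleton of a tetrahedron times $\R$, or the cone over the $2$-skeleton of a hypercube. The minimality of the honeycomb in $B_r(x)$ then follows directly from the known minimality of these cones under compact perturbations, due to Taylor for the first three and to Brakke for the last. No global family of fields and no patching across strata is required, because the whole ball sees a single cone.

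Your outline does correctly identify the same four local models and the standard constant calibrations for the first three. The gap is at the vertex: you show (correctly) that no constant paired calibration can work there, since the antipodal differences have norm $\sqrt2>1$, but you then write ``Granting this'' for the existence of the required non-constant divergence-free fields. That sentence is precisely the content of Brakke's theorem on the hypercube cone; without either constructing those fields explicitly or invoking Brakke, the argument is incomplete. A secondary issue is that your explanation of the role of the bound $r<\sqrt2$ --- limiting the number of cells meeting $\overline{B_r(x)}$ and hence the finite set of lattice vectors to control --- does not single out $\sqrt2$: that finiteness holds for every finite $r$. What $r<\sqrt2$ actually buys is that $B_r(x)$ contains at most one cone type, which is exactly why the paper can dispense with your patching step (itself only sketched, and delicate since divergence-free fields do not glue for free). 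If you want to complete the calibration approach, either reproduce Brakke's construction at the vertex, or, more economically, argue as the paper does and quote Taylor and Brakke directly for the minimality of each model cone.
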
 

\begin{proof} 
The thesis follows from the fact that inside a ball $B_r(x)$ with $r<\sqrt{2}$, the $24$-cell honeycomb is either empty or coincides, up to a translation, with 
one of the following cones in $\R^4$: a hyperplane, three half-hyperplanes meeting at 120 degrees, 
the cone over the $1$-skeleton of a tetrahedron times $\R$, the cone over the $2$-skeleton of a hypercube.
Thanks to the results by Taylor \cite{taylor} and Brakke  \cite{brakke} it is known that such cones define partitions of $\R^4$ which are minimal for the perimeter under compact perturbations.
%The boundary of the $24$-cell is composed of $24$ octahedral cells with $6$  meeting at each vertex, and $3$ at each edge. Together they have $96$ triangular faces, $96$ edges, and $24$ vertices. The vertex figure is a cube. 
 %Brakke in \cite{brakke} proved that  in dimension $n\geq 4$ the cone over  the edges of the hypercube is area minimizing, differently from the case of   dimension $2$ and $3$, where the cone over the cube is not minimal.  
 %So, the $24$-cell is locally minimal in every compact set which contains at most one vertex. 
\end{proof}  
In analogy with the case of the $24$-cell in $\R^4$, one may wonder if the Voronoi cells associated to the $E_8$ lattice in $\R^8$ and the Leech lattice in $\R^{24}$, which are solutions to the  sphere packing problem and the kissing number problem (see \cites{c,v}), are also solutions to problem \eqref{iso}. 

\smallskip

A reduced problem which should be easier to tackle is given by 
\begin{equation}\label{isobis}
\inf_{G\in\G_m} \Per_\phi(V_G),
\end{equation} 
 where $V_G$ is the Voronoi cell of the lattice $G$.
 
The existence of a minimal lattice for \eqref{isobis} can be shown reasoning as in Section \ref{sec:ex} above.
In this case, it should be simpler to explore the optimality of the $D_4$, $E_8$ and Leech lattices, respectively.
In three dimensions one might expect that the BCC lattice is the optimal one, since its Voronoi cell is given by the truncated octahedron
(see \cite[Conjecture 2.1]{Gal14}).

\medskip
Finally we discuss the asymptotic behavior as the dimension $n$ goes to $+\infty$  of the isoperimetric function,  that is 
   the minimal value in \eqref{iso}, with $m=1$. 
We have the following result.
\begin{proposition} \label{asi} The isoperimetric function $c(n)$ satisfies:
\begin{equation}\label{cn}
\sqrt{2\pi e n}\sim n\omega_n^{\frac{1}{N}}\leq c(n)\leq   \frac{ 2 }{(2\zeta(n))^{\frac{1}{n}}}  n \omega_n^{\frac{1}{n}} \sim 2 \sqrt{2\pi e n}
\end{equation}
where $\zeta(n)$ is the Riemann zeta-function. 
\end{proposition}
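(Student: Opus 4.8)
The plan is to establish the two inequalities in \eqref{cn} separately, both by comparison arguments, and then to invoke the standard asymptotics of $\omega_n^{1/n}$ to identify the limiting behavior. For the \emph{lower} bound, I would argue that any fundamental domain $D$ of any lattice $G\in\G_1$ has volume $|D|=1$, and since the Euclidean ball minimizes perimeter among all sets of given volume (the isoperimetric inequality in $\R^n$), we get $\Per(D)\geq \Per(B)$ where $B$ is the unit-volume ball. Since $|B|=\omega_n r^n=1$ forces $r=\omega_n^{-1/n}$, and $\Per(B)=n\omega_n r^{n-1}=n\omega_n^{1/n}$, this gives $c(n)\geq n\omega_n^{1/n}$ immediately. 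The asymptotic $n\omega_n^{1/n}\sim\sqrt{2\pi e n}$ then follows from Stirling's formula applied to $\omega_n=\pi^{n/2}/\Gamma(\tfrac n2+1)$: one computes $\omega_n^{1/n}\sim (2\pi e/n)^{1/2}$, hence $n\omega_n^{1/n}\sim\sqrt{2\pi e n}$.

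For the \emph{upper} bound, the idea is to exhibit one specific lattice and one fundamental domain whose perimeter realizes the claimed value; the natural choice is the Voronoi cell $V_G$ of a lattice $G\in\G_1$ with good packing density. By the Minkowski--Hlawka theorem there exists a lattice $G$ of covolume $1$ whose packing density $\Delta(G)$ satisfies $\Delta(G)\geq 2\zeta(n)/2^n$; equivalently, writing $\rho_G=\lambda(G)/2$ for the packing radius, one has $\omega_n\rho_G^n=\Delta(G)\,d(G)=\Delta(G)\geq \zeta(n)2^{1-n}$, so $\rho_G\geq \bigl(\zeta(n)2^{1-n}/\omega_n\bigr)^{1/n}$. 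The key geometric input is then an upper bound on $\Per(V_G)$ in terms of $\rho_G$: since $V_G$ is a convex body containing the ball $B_{\rho_G}$ (every point within distance $\rho_G$ of the origin is strictly closer to $0$ than to any other lattice point) and has volume $|V_G|=d(G)=1$, one can bound its perimeter by a multiple of $|V_G|/\rho_G$. Concretely, for a convex body $K\supseteq B_r$ one has the elementary estimate $\Per(K)\le \frac{n}{r}|K|$ — this follows by writing $K$ as a union of cones over its boundary facets with apex at the incenter, each cone of height $\geq r$, giving $|K|\geq \frac{r}{n}\Per(K)$. Applying this with $K=V_G$, $r=\rho_G$ yields
\[
c(n)\le \Per(V_G)\le \frac{n}{\rho_G}\le n\Bigl(\frac{\omega_n}{\zeta(n)2^{1-n}}\Bigr)^{1/n}
= \frac{2}{(2\zeta(n))^{1/n}}\,n\,\omega_n^{1/n}.
\]
Finally, since $\zeta(n)\to 1$ as $n\to\infty$, we have $(2\zeta(n))^{1/n}\to 1$, so the right-hand side is asymptotic to $2n\omega_n^{1/n}\sim 2\sqrt{2\pi e n}$, completing \eqref{cn}.

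The main obstacle I anticipate is the convex-geometry estimate $\Per(V_G)\le \frac{n}{\rho_G}|V_G|$: while the cone decomposition argument is standard, one must be careful that the inradius of $V_G$ is exactly (or at least bounded below by) the packing radius $\rho_G=\lambda(G)/2$, which uses the defining property of the Voronoi cell, and one must ensure the apex can be taken at the origin (the center of symmetry of $V_G$), so that all the cone heights are genuinely $\geq\rho_G$. A secondary point requiring care is the precise normalization in the Minkowski--Hlawka bound — whether the density is bounded by $\zeta(n)/2^{n-1}$ or a comparable constant — but any such version gives the same asymptotic constant $2$ after taking $n$-th roots, since the discrepancy is a factor that is $1+o(1)$ raised to the power $1/n$. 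The asymptotic computations themselves are routine applications of Stirling's formula.
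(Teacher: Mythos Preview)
Your proposal is correct and matches the paper's proof essentially line for line: the lower bound via the isoperimetric inequality for the unit-volume ball, the upper bound via a lattice furnished by Minkowski--Hlawka together with the cone-decomposition estimate $\Per(V_G)\le n|V_G|/\rho_G$, and the Stirling asymptotics for $n\omega_n^{1/n}$. The only cosmetic difference is that the paper writes the cone estimate directly in terms of the facets $F_i$ of the polytope $V_G$ and their supporting-hyperplane distances $h_i\ge\rho_G$, whereas you phrase it for a general convex body containing $B_{\rho_G}$; these are the same argument, and your worry about the inradius of $V_G$ being $\ge\rho_G$ is exactly what the paper uses via $h_i\ge\rho_G$.
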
 

\begin{proof}
We consider a generic lattice $G\in \G_1$ and its associated Voronoi cell $V_G$  (see Definition \ref{voronoi}). 
Let  $F_i$ be the facets of $V_G$ and $h_i$ the distance between the center of the cell and the hypersurface cointaing   $F_i$: by its definition we get that $h_i\geq \rho_G$, where $\rho_G$ is the packing radius of $G$. Then we obtain 
\[
1=|V_G|=\frac{\sum_i  \Per(F_i) h_i}{n}\ge \frac{\Per(V_G)\rho_G}{n}\qquad\text{ and so } \Per(V_G)\leq \frac{n}{\rho_G}. 
\]
 Minkowski-Hlawka theorem \cite[chapter 1]{CS}  provides a nonconstructive proof of the existence of lattices $G$ in $\R^N$ such that 
 \[
 \rho_G\geq  \frac{\zeta(n)^{\frac{1}{n}}}{\omega_n^{\frac{1}{n}} 2^{1-\frac{1}{n}}}, \] where $\zeta(n)$ is the Riemann zeta-function. 
As a consequence, for these lattices $G$ we get 
\[ \Per(V_G)\leq \frac{n}{\rho_G}\leq \frac{n \omega_n^{\frac{1}{n}} 2^{1-\frac{1}{n}}}{\zeta(n)^{\frac{1}{n}}}=\frac{2}{(2\zeta(n))^{\frac{1}{n}}}n\omega_n^{\frac{1}{n}}\sim 2 \sqrt{2\pi e n}.\]
On the other side by direct comparison with the perimeter of the ball of volume $1$, we get  $\Per (V_G)\geq n\omega_n^{\frac{1}{N}}\sim \sqrt{2\pi e n}$. This permits to conclude  \eqref{cn}. 
  \end{proof}

%%%%%%%%%%%%%%% REFERENZE %%%%%%%%%%%%%%%%%%%%%%%
\begin{bibdiv}
\begin{biblist}

\bib{ac}{article}{
    AUTHOR = {Ambrosio, L.},
    author={Caselles, V.},
    author={Masnou, S.},
    author={Morel, J.M.}, 
    title={Connected Components of Sets of Finite Perimeter and Applications to Image Processing}, 
journal={J. Eur. Math. Soc.(JEMS)},
volume={23},
year={2001}, 
number={1},
pages={39–92},
}

\bib{brakke}{article}{
AUTHOR = {Brakke, K. A.},
     TITLE = {Minimal cones on hypercubes},
   JOURNAL = {J. Geom. Anal.},
    VOLUME = {1},
      YEAR = {1991},
    NUMBER = {4},
     PAGES = {329--338},
}
		
\bib{cassels}{book}{ 
    AUTHOR = {Cassels, J. W. S.},
     TITLE = {An introduction to the geometry of numbers},
    SERIES = {Classics in Mathematics},
      NOTE = {Corrected reprint of the 1971 edition},
 PUBLISHER = {Springer-Verlag, Berlin},
      YEAR = {1997},
     PAGES = {viii+344},
      ISBN = {3-540-61788-4},}
	
\bib{cnparti}{article}{
    AUTHOR = {Cesaroni, A.},
    author={Novaga, M.},
     TITLE = {Periodic partitions with minimal perimeter},
   JOURNAL = {ArXiv Preprint 2212.11545},
      YEAR = {2022},
   }

\bib{choe}{article}{
   AUTHOR = {Choe, J.},
     TITLE = {On the existence and regularity of fundamental domains with
              least boundary area},
   JOURNAL = {J. Differential Geom.},
    VOLUME = {29},
      YEAR = {1989},
    NUMBER = {3},
     PAGES = {623--663},
}

\bib{c}{article}{
 AUTHOR = {Cohn, Henry},
 author={Kumar, Abhinav},  
 author={Miller, Stephen D.},
 author={Radchenko, Danylo}, 
 author={Viazovska, Maryna},
     TITLE = {Universal optimality of the {$E_8$} and {L}eech lattices and
              interpolation formulas},
   JOURNAL = {Ann. of Math. (2)},
     VOLUME = {196},
      YEAR = {2022},
    NUMBER = {3},
     PAGES = {983--1082},
}
\bib{CS}{book}{
author={J.H. Conway},
author={N.J.A. Sloane}, 
 TITLE = {Sphere packings, lattices and groups. Third edition},
    SERIES = {Grundlehren der mathematischen Wissenschaften},
    VOLUME = {290},
      %NOTE = {An introduction to geometric measure theory},
 PUBLISHER = {Springer-Verlag, New York},
      YEAR = {1999},
}
		
\bib{Gal14}{article}{ 
AUTHOR = {Gallagher, Paul},
 author={Ghang, Whan},  
 author={Hu, David},
 author={Martin, Zane}, 
 author={Miller, Maggie},
 author={Perpetua, Byron}, 
 author={Waruhiu, Steven},
 title={Surface-area-minimizing n-hedral Tiles},
journal={Rose-Hulman Undergraduate Mathematics Journal}, 
volume={15},
year={2014},
NUMBER = {1},
PAGES = {Article 13},
}

\bib{hales}{article}{
author={ Hales, T.C.},
title={The honeycomb conjecture},
journal={Discrete Comput. Geom.},
volume={ 25},
 YEAR = {2001},
    NUMBER = {1},
     PAGES = {1--22},
}

\bib{maggi}{book}{
    AUTHOR = {Maggi, F.},
     TITLE = {Sets of finite perimeter and geometric variational problems},
    SERIES = {Cambridge Studies in Advanced Mathematics},
    VOLUME = {135},
      %NOTE = {An introduction to geometric measure theory},
 PUBLISHER = {Cambridge University Press, Cambridge},
      YEAR = {2012},
}

\bib{ma}{article}{
AUTHOR = {Mahler, K.},
     TITLE = {On lattice points in {$n$}-dimensional star bodies. {I}.
              {E}xistence theorems},
   JOURNAL = {Proc. Roy. Soc. London Ser. A},
     VOLUME = {187},
      YEAR = {1946},
     PAGES = {151--187},
      }		
      
\bib{mnpr}{article}{
    AUTHOR = {Martelli, B.}, 
    AUTHOR = {Novaga, M.}, 
    AUTHOR = {Pluda, A.},
    AUTHOR = {Riolo, S.},
     TITLE = {Spines of minimal length},
   JOURNAL = {Ann. Sc. Norm. Super. Pisa Cl. Sci. (5)},
    VOLUME = {17},
      YEAR = {2017},
    NUMBER = {3},
     PAGES = {1067--1090},
     }
      
\bib{morgan}{article}{
    AUTHOR = {Morgan, F.},
    AUTHOR = {French, C.},
    AUTHOR = {Greenleaf, S.},
     TITLE = {Wulff clusters in {$\mathbb R^2$}},
   JOURNAL = {J. Geom. Anal.},
    VOLUME = {8},
      YEAR = {1998},
    NUMBER = {1},
     PAGES = {97--115},
}

\bib{morgansolo}{article}{
    AUTHOR = {Morgan, F.},
     TITLE = {The hexagonal honeycomb conjecture},
   JOURNAL = {Trans. Amer. Math. Soc. },
    VOLUME = {351},
      YEAR = {1999},
    NUMBER = {5},
     PAGES = {1753--1763},
}

\bib{npst}{article}{
    AUTHOR = {Novaga, M.}, 
    author={Paolini, E.}, 
    author={Stepanov, E.}, 
    author={ Tortorelli, V.M.},
     TITLE = {Isoperimetric clusters in homogeneous spaces via concentration compactness},
   JOURNAL = { J. Geom. Anal. },
    VOLUME = {32},
      YEAR = {2022},
      NUMBER = {11},
      PAGES = {Paper No. 263}, 
}

\bib{taylor}{article}{
    AUTHOR = {Taylor, J. E.},
     TITLE = {The structure of singularities in solutions to ellipsoidal
              variational problems with constraints in {${\rm R}^{3}$}},
   JOURNAL = {Ann. of Math. (2)},
    VOLUME = {103},
      YEAR = {1976},
    NUMBER = {3},
     PAGES = {541--546},
}

\bib{thompson}{article}{ 
AUTHOR = {Thomson (Lord Kelvin), W.},
     TITLE = {On the division of space with minimum partitional area},
   JOURNAL = {Acta Math.},
    VOLUME = {11},
      YEAR = {1887},
    NUMBER = {1-4},
     PAGES = {121--134},
}
\bib{v}{article}{ 
    AUTHOR = {Viazovska, Maryna S.},
     TITLE = {The sphere packing problem in dimension $8$},
   JOURNAL = {Ann. of Math. (2)},
    VOLUME = {185},
      YEAR = {2017},
    NUMBER = {3},
     PAGES = {991--1015},
}
	
\bib{kelvin}{book}{
EDITOR = {Weaire, D.},
     TITLE = {The {K}elvin problem},
         NOTE = {Foam structures of minimal surface area},
 PUBLISHER = {Taylor \& Francis, London},
      YEAR = {1996},
      }
 
 \bib{we}{article}{
author={ D. Weaire}, 
title={Kelvin's foam structure: a commentary},
journal={ Philosophical
Magazine Letters}, 
volume={88},
number={2},
pages={ 91--102},
year={2008},}

\bib{w}{article}{
author = {Weaire, D.}, 
author={Phelan, R.},
title = {A counter-example to Kelvin's conjecture on minimal surfaces},
journal = {Philosophical Magazine Letters},
volume = {69},
number = {2},
pages = {107-110},
year  = {1994},
} 
\end{biblist}\end{bibdiv}

 \end{document}